\DeclareMathOperator{\Add}{Add}
\DeclareMathOperator{\Rem}{Rem}
\DeclareMathOperator{\C}{\mathbb{C}}
\DeclareMathOperator{\Z}{\mathbb{Z}}
\DeclareMathOperator{\el}{\ell_{\lm}}
\DeclareMathOperator{\emu}{\ell_{\mu}}
\DeclareMathOperator{\sem}{s_{\mu}}
\DeclareMathOperator{\F}{\mathbb{F}}
\DeclareMathOperator{\lm}{\lambda}
\newenvironment{customthm}[1]
  {\innercustomthm}
  {\endinnercustomthm}
\newtheorem{thm}{Theorem}[section]
\newtheorem{prop}[thm]{Proposition}
\newtheorem{lem}[thm]{Lemma}
\theoremstyle{remark}
\newtheorem{rem}[thm]{Remark}
\theoremstyle{definition}
\newtheorem{df}[thm]{Definition}
\title{ \textbf{\normalsize{ON THE MODULAR MCKAY GRAPH OF $SL_n(p)$ WITH RESPECT TO ITS STANDARD REPRESENTATION}}}
\author{\normalsize \textsc{
		MIRIAM G. NORRIS}}
		\date{}
\begin{document}

\maketitle

\begin{abstract}
Let $F$ be an algebraically closed field of prime characteristic $p$. The modular McKay graph of $G:=SL_n(p)$ with respect to its standard $FG$-module $W$ is the connected, directed graph whose vertices are the irreducible $FG$-modules and for which there is an edge from a vertex $V_1$ to $V_2$ if $V_2$ occurs as a composition factor of the tensor product $V_1 \otimes W$. We show that the diameter of this modular McKay graph is $\frac{1}{2} (p-1)(n^2-n)$.
\end{abstract} 

\section{\normalsize{INTRODUCTION}}
Let $G$ be a finite group, $F$ an algebraically closed field. For an $FG$-module $W$ the McKay graph $\mathcal{M}_F(G,W)$ is the directed graph on the set of simple $FG$-modules where there is an edge from $V_1$ to $V_2$ if $V_2$ is a composition factor of $V_1 \otimes W.$
Such graphs famously come up in the McKay correspondence which says for $G$ a finite subgroup of $SU_2(\C),$ $F=\C$ and $W$ the standard $2$-dimensional $FG$-module, $\mathcal{M}_F(G,W)$ is an affine Dynkin diagram of type $A,D$ or $E.$  
We say that a McKay graph $\mathcal{M}_F(G,W)$ is modular if the characteristic of $F$ divides the order of $|G|.$

It is a result of Burnside and Brauer \cite{10.2307/2034344} that an $FG$-module $W$ is faithful if and only if every irreducible $FG$-module occurs as a composition factor of a tensor power of $W.$ This implies that for any two irreducible $FG$-modules $V_1$ and $V_2$ there exists some integer $j$ such that $V_1 \otimes V_2^*$ has a composition factor in common with $W^{\otimes j}$.  From this we can deduce that there exists is a path from $V_1$ to $V_2$ of length $j.$ This means when $W$ is a faithful $FG$-module the graph $\mathcal{M}_F(G,W)$ is connected and it makes sense to look for results about its diameter. 

 Benkart et. al. \cite{benkart2018tensor} initiated a study of mixing times for particular random walks on modular McKay graphs. This built on work of Fulman  \cite{fulman2008convergence} who considered random walks on McKay graphs where $F=\C$. A statement about the diameter of modular McKay graphs provides a lower bound for the mixing times of the random walks considered in \cite{benkart2018tensor}.

Liebeck, Shalev and Tiep \cite{Liebeck} observed that when $G$ is a finite group and $W$ is a faithful $FG$-module there is an obvious lower bound for the diameter:
\begin{align*}
    diam \mathcal{M}_F(G,W) \geq \frac{\log(b(G)) }{\log(\dim W)}
\end{align*}
where $b(G)$ is the largest dimension of an irreducible $FG$-module.
For $F=\C,$ $G$ a finite simple group and $W$ an irreducible $FG$-module it is conjectured in \cite{Liebeck} that this bound is tight up to a constant and this is proved when $G$ has bounded rank. Motivated by this we consider the modular McKay graphs for a family of groups of unbounded rank. 
In this paper we prove the following theorem concerning the graph
$\Gamma = \mathcal{M}_F(G,V_n)$ where $F$ is a field of characteristic  $p$, $G= SL_n(p)$ and $V_n$ is the standard $n$-dimensional $FG$-module. 

\begin{thm} \label{mainresult}
The diameter of the modular McKay graph of $SL_n(p)$ with respect to its standard module $V_n$ is $\frac{1}{2}(p-1)(n^2-n).$
\end{thm}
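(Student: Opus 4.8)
The plan is to translate everything into highest–weight combinatorics. By Steinberg's tensor product theorem the simple $FG$-modules are the $L(\lambda)$ with $\lambda=\sum_{i=1}^{n-1}a_i\omega_i$, $0\le a_i\le p-1$, and $V_n=L(\omega_1)$. First I would record the formal reduction: for simple modules, the distance from $L(\lambda)$ to $L(\mu)$ in $\Gamma$ equals $\min\{\ell\ge 0:\ L(\mu)\text{ is a composition factor of }L(\lambda)\otimes V_n^{\otimes\ell}\}$ (composing edges produces such composition factors, and conversely one peels off one tensor factor at a time). Then I introduce the linear functional $N(\lambda)=\sum_{i=1}^{n-1}i\,a_i=\langle\lambda,\sum_j j\,\alpha_j^{\vee}\rangle$, equal to the number of boxes of the partition $(a_1+\dots+a_{n-1},\,a_2+\dots+a_{n-1},\,\dots,\,a_{n-1})$ attached to $\lambda$; note $N(L(0))=0$ and $N(\mathrm{St})=N((p-1)\rho)=(p-1)\sum_{i=1}^{n-1}i=\tfrac12(p-1)(n^2-n)$, where $\mathrm{St}=L((p-1)\rho)$ is the Steinberg module. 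The claim is then that the diameter is realised by $\mathrm{dist}(L(0)\to\mathrm{St})$ and equals $N(\mathrm{St})$.

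For the lower bound the key point is: if $L(\mu)$ is a composition factor of $L(\lambda)\otimes V_n$ then $N(\mu)\le N(\lambda)+1$. Indeed every weight of $V_n$ is $\le\omega_1$, so $\mu\le\lambda+\omega_1$; writing $\lambda+\omega_1-\mu=\sum_j m_j\alpha_j$ with $m_j\ge 0$ and applying $N(-)=\langle-,\sum_j j\,\alpha_j^{\vee}\rangle$ together with the $A_{n-1}$ Cartan relations (which give $\langle\alpha_j,\sum_k k\,\alpha_k^{\vee}\rangle=0$ for $j\le n-2$ and $=n$ for $j=n-1$) yields $N(\lambda)+1-N(\mu)=n\,m_{n-1}\ge 0$. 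Hence $N$ increases by at most $1$ across each edge, so every distance from $L(\lambda)$ to $L(\mu)$ is at least $N(\mu)-N(\lambda)$, and in particular $\mathrm{dist}(L(0)\to\mathrm{St})\ge N(\mathrm{St})=\tfrac12(p-1)(n^2-n)$.

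For the matching upper bounds I would first prove a building‑block lemma: if $\nu$ and $\nu+\epsilon_i$ (with $\epsilon_1,\dots,\epsilon_n$ the weights of $V_n$) are both dominant and $p$-restricted, then $L(\nu+\epsilon_i)$ is a composition factor of $L(\nu)\otimes V_n$. This is shown by comparison with the Weyl module: $\Delta(\nu)\otimes V_n$ has a good filtration with sections $\Delta(\nu+\epsilon_j)$ over the dominant $\nu+\epsilon_j$, the weight $\nu+\epsilon_i$ occurs in $L(\nu)\otimes V_n$ with nonzero multiplicity, and a linkage‑based bookkeeping shows $L(\nu+\epsilon_i)$ is not absorbed by higher composition factors (the characteristic‑$p$ "reflection" contributions all lie among the $\nu+\epsilon_j$ with $j<i$). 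Granting this, any $p$-restricted $\mu$ is reached from $L(0)$ by filling the columns of its partition from left to right, top to bottom: every partial shape is again $p$-restricted (each $a_i$ never exceeds its final value $\le p-1$) and $N$ rises by exactly $1$ at each step, so $\mathrm{dist}(L(0)\to L(\mu))=N(\mu)$; combined with the lower bound this gives $\mathrm{dist}(L(0)\to\mathrm{St})=\tfrac12(p-1)(n^2-n)$.

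It then remains to bound every distance above by $\tfrac12(p-1)(n^2-n)$, the delicate case being "downward" pairs such as $(\mathrm{St},L(0))$, where the above yields nothing. Here I would use: (i) $\wedge^n V_n\cong L(0)$ occurs as both a submodule and a quotient of $V_n^{\otimes n}$, so $\Gamma$ has a closed walk of length $n$ at every vertex, making all admissible path lengths attainable once they are long enough; (ii) $L(\lambda)\otimes L(\lambda)^{*}\cong\mathrm{End}_F(L(\lambda))$ always has the trivial module as a composition factor, whence (tensoring a filtration with $L(\lambda)$) $\mathrm{dist}(L(\lambda)\to L(0))\le\min\{\ell:\ L(\lambda)^{*}\text{ is a composition factor of }V_n^{\otimes\ell}\}=N(\lambda^{*})=\sum_{k=1}^{n-1}(n-k)a_k\le\tfrac12(p-1)(n^2-n)$; and (iii) the characteristic‑$p$ reflection edges, which provide shortcuts "wrapping" a coordinate at the wall $a_i=p-1$ back down towards $a_i=1$ in one step (the $SL_2(p)$ phenomenon). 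Assembling the complete list of one‑step moves — box‑adds, the column‑removal move $\epsilon_n$, and the wrap edges — one obtains an explicit upper bound for $\mathrm{dist}(L(\lambda)\to L(\mu))$ in terms of the two partitions, and a direct maximisation over all $p$-restricted pairs gives $\tfrac12(p-1)(n^2-n)$, attained only at $(L(0),\mathrm{St})$. The hard part will be exactly this last step: controlling the cost of shrinking a $p$-restricted weight, since the only elementary shrinking move removes a full height-$(n-1)$ column and so is available only for "full‑height" weights, forcing expensive build‑ups first; reconciling this with a single extremal pair, and pinning down the wrap edges precisely enough to carry out the maximisation, is where the real work lies. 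By contrast the lower bound and the computation of $\mathrm{dist}(L(0)\to\mathrm{St})$ are comparatively clean.
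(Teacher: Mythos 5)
Your overall skeleton (a linear functional $N$ that increases by at most one along edges, giving the lower bound via the pair $(\underline{0},St_p)$, plus an explicit-path upper bound) is the same as the paper's, and your $N$ coincides with the paper's $f$. But there are genuine gaps at each of the three load-bearing points. First, your building-block lemma is false as stated: it is not true that whenever $\nu$ and $\nu+\epsilon_i$ are dominant and $p$-restricted, $L(\nu+\epsilon_i)$ occurs in $L(\nu)\otimes V_n$. Take $SL_3$, $p=3$, $\nu=(1,1)$ (partition $(2,1,0)$), $i=3$, so $\nu+\epsilon_3=(1,0)$: here $\mathrm{ch}\,L(1,1)=\chi(1,1)-\chi(0,0)$, hence $\mathrm{ch}\bigl(L(1,1)\otimes V_3\bigr)=\chi(2,1)+\chi(0,2)$, and both Weyl modules $W(2,1)$, $W(0,2)$ are irreducible at $p=3$ (Jantzen sum formula for $(2,1)$, and $S^2(V^*)$ for $(0,2)$), so the composition factors are exactly $L(2,1)$ and $L(0,2)$ and $L(1,0)$ does not occur. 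The true statement requires the conormality condition on the added node, which is precisely what the paper imports from Brundan--Kleshchev (Theorem B(iv)); the paper only ever uses the two additions (first row, and first row after the initial block) that are always conormal. Second, your lower-bound step ``$L(\mu)$ a factor of $L(\lambda)\otimes V_n$ implies $\mu\le\lambda+\omega_1$'' is only valid for the algebraic group. The graph is for the finite group $SL_n(p)$, where a non-restricted algebraic factor $L(\mu')$ splits further on restriction (Frobenius twists become untwisted), and the resulting restricted highest weights need not be subdominant to $\lambda+\omega_1$ — indeed your own ``wrap'' edges have $\lambda+\omega_1-\mu=(p-1)\omega_1$, which lies in the root lattice only when $n\mid p-1$. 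One still has $N(\mu)\le N(\lambda)+1$, but proving it needs an extra induction via Steinberg's tensor product theorem (the paper's Proposition 4.1), which your sketch omits.

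Third, and most seriously, the uniform upper bound $\tfrac12(p-1)(n^2-n)$ for \emph{arbitrary} pairs — which is the heart of the theorem — is not established: you acknowledge that the maximisation over all pairs ``is where the real work lies,'' the wrap edges you would need are only asserted (they follow in the paper from the conormal-node theorem combined with Steinberg untwisting at the finite-group level, Lemma 3.2(1),(2)), and your fallback route through the trivial module gives $N(\lambda^*)+N(\mu)$, which is roughly twice the claimed diameter. The paper's Lemma 4.4 does exactly this missing work with an explicit two-stage path (flush $\lambda$ rightwards using the mod-$(p-1)$ wrap moves, then rebuild $\mu$) and a careful step count. So as it stands the proposal proves neither the corrected edge-existence statements nor the upper bound, and the lower bound needs the finite-group repair described above.
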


Note that the irreducible $FG$-module of largest dimension is the Steinberg module. This has dimension $p^{\frac{1}{2}(n^2-n)}$ giving us a lower bound of $\frac{(n^2 -n )\log (p)}{2 \log(n)}$ which is much smaller than $\frac{1}{2}(p-1)(n^2-n)$ suggesting very different behaviour from the cases considered in \cite{Liebeck}.

We prove Theorem \ref{mainresult} in Section 4 by showing $\frac{1}{2}(p-1)(n^2-n)$ is both a lower and an upper bound for the diameter of $\Gamma$. We find a lower bound using basic results about $\C SL_n(\C)$-modules reviewed in Section 2. In Section 3 we use a result of Brundan and Kleshchev [Theorem V(iv),\cite{brundan2000translation}] to prove the existence of certain edges in $\Gamma.$ This allows us to show the lower bound is also an upper bound in Section 4.

\section*{\normalsize{\textit{Acknowledgements}}} The author gratefully acknowledges Martin Liebeck for suggesting the problem, guiding its progress and patiently reviewing preliminary  versions of this work. The author would also like to acknowledge  Alexander Kleshchev for an extremely helpful conversation. This work was supported by the Engineering and Physical Sciences Research Council [EP/L015234/1] through the EPSRC Centre for Doctoral Training in Geometry and Number Theory (London School of Geometry and Number Theory).

\section{\normalsize{THE MCKAY GRAPH OF $SL_n(\C)$}}

Let $\Gamma_{\C} := \mathcal{M}_{\C}(SL_n(\C),V_n)$ be the McKay graph of $SL_n(\C)$ with respect to the standard $n$-dimensional $\C G$-module $V_n.$ In this section we will review some basic facts about the ordinary representation theory of $SL_n(\C).$ This will enable us in Lemma \ref{char0result} to obtain a restriction on the edges of $\Gamma_{\C}$. 

We begin with some notation. Let $H=SL_n(\C)$ and denote by $\Phi$ the root system of type $A_{n-1}$ corresponding to some maximal torus $T \leq H.$ Fix $\Delta = \{\alpha_1,\dots,\alpha_{n-1}\}$ to be a base of simple roots in $\Phi$ and let $E$ be the Euclidean space spanned by $\Delta$. Let $\Delta^{\vee}$ denote the set of simple coroots and $\{\lm_1,\dots, \lm_{n-1}\}$ the dual basis of fundamental dominant weights relative to the inner product on $E.$ The Cartan matrix $C$ expresses the change of basis from the set of fundamental weights to $\Delta$. A weight is a vector in $E$ with integral inner product with the coroots and can be written as a integral combination of the fundamental dominant weights. Therefore if $\lm$ is a weight we can write $\lm = \sum_i m_i \lm_i$ and hence think of it as a vector of integers $m = (m_1,\dots,m_{n-1});$ this is the notation we will adopt throughout the paper. Applying the change of basis matrix $C^{-1}$ allows us to write such a weight $\lm$ as a sum of simple roots, $\lm = \sum_i c_i \alpha_i$ such that $C^{-1}m^{T}=c^{T}$ where $c = (c_1,\dots,c_{n-1}).$ We call a weight $\lm= \sum_i m_i \lm_i$ dominant if $m_i\geq 0$ for $1\leq i \leq n-1$. There exists a partial ordering on the set of weights: we write $\lm \leq \mu$ and say $\lm$ is subdominant to $\mu$ if $\mu-\lm$ has nonnegative coefficients when written as a sum of simple roots.
 
The irreducible $\C H$-modules are characterised by the set of dominant weights [see for example $\S 15$ in \cite{malle_testerman_2011}]. For a dominant weight $\lm$ we write its corresponding irreducible $\C H$-module $W_{\C}(\lm).$ These are the vertices of the graph $\Gamma_{\C}.$ The irreducible $\C H$-module corresponding to the $n-1$-tuple $(1,0,\dots,0)$ is the $n$-dimensional standard module which we denoted by $V_n.$ Furthermore the trivial module corresponds to the $n-1$-tuple $(0,\dots,0)$ which we will denote throughout as $\underline{0}.$ 
For two dominant weights $\lm,\mu$ we write $\lm \rightarrow_{\C} \mu$ if there is a directed edge in $\Gamma_{\C}$ from $W_{\C}(\lm)$ to $W_{\C}(\mu)$. The composition factors of $W_{\C}(\lm) \otimes V_n$ are known as an application of the Littlewood-Richardson rule [see for example $\S 6.1$ in \cite{fulton2013representation}].

 \begin{lem}\label{options}
Let $H=SL_n(\C)$ and let $W_{\C}(\lm)$ and $V_n$ be $\C H$-modules as defined above with $\lm = (m_1,\dots, m_{n-1}).$ Then $W_{\C}(\mu)$ is a composition factor of $W_{\C}(\lm) \otimes V_n$ if and only if $\mu$ takes the form of one of the following:
\begin{enumerate}[(a)]
    \item $(m_1+1,m_2,\dots,m_{n-1})$
    \item $(m_1,\dots, m_{i-1},m_{i}-1,m_{i+1}+1,\dots m_{n-1})$ for $1 \leq i \leq n-1.$
    \item $(m_1,\dots,m_{n-2},m_{n-1}-1).$
\end{enumerate}
\end{lem}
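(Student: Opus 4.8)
The plan is to compute the composition factors of $W_{\C}(\lm) \otimes V_n$ using the Littlewood--Richardson rule and translate the answer from the partition/Young-diagram language into the fundamental-weight coordinates $(m_1,\dots,m_{n-1})$ used throughout the paper. First I would recall that $V_n = W_{\C}(\lm_1)$ corresponds to a single box, so the Pieri rule (the special case of Littlewood--Richardson for tensoring with a one-row representation) applies: if $\lm$ corresponds to a partition $\kappa = (\kappa_1 \geq \kappa_2 \geq \cdots)$ with at most $n$ parts, then $W_{\C}(\kappa) \otimes V_n = \bigoplus W_{\C}(\kappa')$, where $\kappa'$ runs over all partitions obtained from $\kappa$ by adding a single box in such a way that the result is still a partition with at most $n$ parts (and, for $SL_n$ as opposed to $GL_n$, one quotients by the determinant, i.e.\ deletes a full column of height $n$ when one is created).

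Next I would set up the dictionary between the two coordinate systems. A dominant weight $\lm = \sum_i m_i \lm_i$ for $SL_n$ corresponds to the partition whose $j$-th part is $\kappa_j = m_j + m_{j+1} + \cdots + m_{n-1}$ (read off by recording where the column-lengths jump: there are $m_i$ columns of height exactly $i$). Conversely $m_i = \kappa_i - \kappa_{i+1}$. In this language, adding a box in row $i+1$ is legal precisely when $\kappa_i > \kappa_{i+1}$, i.e.\ $m_i \geq 1$, and the effect on the tuple is to decrease $m_i$ by one and increase $m_{i+1}$ by one — this is exactly case (b) for $1 \leq i \leq n-1$ with the conventions $m_0$ absent (adding a box to row $1$, handled by case (a), which only increases $\kappa_1$ hence only raises $m_1$ by one) and $m_n$ absent (adding a box to row $n$: this either raises $m_{n-1}$ by one if $\kappa_{n-1} > \kappa_n$, or, when it completes a column of height $n$, that column is then deleted for $SL_n$, producing case (c), a net decrease of $m_{n-1}$ by one; one checks both sub-cases collapse to the stated list). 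So the three families (a), (b), (c) are exactly the images under the dictionary of "add a box to row $1$", "move the bottom box of a block down one row", and "add then delete a height-$n$ column".

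The main work, and the place to be careful, is the boundary bookkeeping: verifying that every legal box-addition lands in exactly one of the three listed forms and that every listed tuple is actually a dominant weight arising this way (in particular that (b) with $i = n-1$ and (c) do not secretly coincide or double-count, and that when some $m_i = 0$ the corresponding type-(b) move is correctly \emph{excluded} because adding a box in row $i+1$ would violate the partition condition — which matches the fact that the resulting tuple would have a negative entry). I would handle this by a short case analysis on which rows of $\kappa$ have equal lengths, using the $\kappa_j = \sum_{t \geq j} m_t$ formula to see that "$\kappa_j$ can be incremented" $\iff$ "$j = 1$ or $m_{j-1} \geq 1$ or $j = n$", and matching each possibility to (a), (b), or (c). The $SL_n$ (versus $GL_n$) subtlety — deleting a full column of length $n$ — is the one genuinely non-routine point, and is what makes case (c) appear; everything else is a direct transcription of Pieri's rule.
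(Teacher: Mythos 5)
Your plan is correct and is essentially the paper's own approach: the paper gives no argument beyond citing the Littlewood--Richardson (Pieri) rule from \S 6.1 of Fulton--Harris, and your dictionary between the fundamental-weight coordinates $(m_1,\dots,m_{n-1})$ and partitions, together with the $SL_n$ column-deletion convention producing case (c), is exactly the intended verification. One small correction to your row-$n$ parenthetical: adding a box in row $n$ never raises $m_{n-1}$; with the normalization $\tilde{\lambda}_n=0$ it is legal precisely when $m_{n-1}\geq 1$ and always creates exactly one column of height $n$, whose deletion gives the net change $m_{n-1}\mapsto m_{n-1}-1$ of case (c) (which, note, coincides with case (b) read at $i=n-1$), so your ``two sub-cases'' are really a single case and the spurious possibility of increasing $m_{n-1}$ should be dropped.
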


We now define a value associated to a dominant weight $\lm$ that will allow us to determine in Lemma \ref{char0result} a restriction on edges in $\Gamma_{\C}.$

\begin{df}\label{f}
Let $\lm$ be any dominant weight and let $c_{n-1}$ be the coefficient of the simple root $\alpha_{n-1}$ in $\lm$ written as a sum of simple roots. We define the following integer associated to $\lm,$ $$f(\lm) = n c_{n-1}.$$
\end{df}

Clearly we have $f(\underline{0})=0,$ furthermore if we let $St_p$ denote the weight $(p-1,\dots,p-1)$ then $f(St_p) = \frac{1}{2}(p-1)(n^2-n).$ Let $d(\lm,\mu)$ denote the distance from $W_{\C}(\lm)$ to $W_{\C}(\mu)$ in the graph $\Gamma_{\C}$.

\begin{lem}\label{char0result}
In the graph $\Gamma_{\C}$ the following hold.
\begin{enumerate}[1)]
    \item If $\lm \rightarrow_{\C} \mu$ then $f(\mu) \leq f(\lm) + 1.$
    \item Furthermore $d(\underline{0},St_p) = \frac{1}{2}(p-1)(n^2-n).$
\end{enumerate}
\end{lem}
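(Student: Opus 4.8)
The plan is to prove part 1) by tracking how the coefficient $c_{n-1}$ of the simple root $\alpha_{n-1}$ changes under each of the three moves described in Lemma~\ref{options}. Write $\lm = \sum_i m_i \lm_i$ and let $c = (c_1,\dots,c_{n-1})$ with $C^{-1}m^T = c^T$, so that $c_{n-1} = \sum_j (C^{-1})_{n-1,j}\, m_j$. Adding $1$ to a single coordinate $m_i$ changes $c_{n-1}$ by exactly $(C^{-1})_{n-1,i}$. For type $A_{n-1}$ the inverse Cartan matrix has the explicit entries $(C^{-1})_{n-1,i} = i/n$; in particular $(C^{-1})_{n-1,n-1} = (n-1)/n$. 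First I would record these values. Then each move transforms $c_{n-1}$ as follows: move (a) adds $(C^{-1})_{n-1,1} = 1/n$ to $c_{n-1}$, so $f(\mu) = f(\lm)+1$; move (b) for a given $i$ subtracts $(C^{-1})_{n-1,i} = i/n$ and adds $(C^{-1})_{n-1,i+1} = (i+1)/n$ (interpreting the case $i=n-1$, where only $m_{n-1}$ decreases, as contributing $-(n-1)/n$), giving a net change of $+1/n$ for $i<n-1$ and $-(n-1)/n$ for $i=n-1$; move (c) subtracts $(C^{-1})_{n-1,n-1} = (n-1)/n$. Multiplying through by $n$, the change in $f$ is $+1$, either $+1$ or $-(n-1)$, or $-(n-1)$ respectively. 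In every case $f(\mu) \le f(\lm) + 1$, which is part 1).

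For part 2), the inequality in part 1) gives the lower bound: any path from $W_{\C}(\underline 0)$ to $W_{\C}(St_p)$ increases $f$ by at most $1$ per edge, and since $f(\underline 0) = 0$ while $f(St_p) = \tfrac12(p-1)(n^2-n)$, any such path has length at least $\tfrac12(p-1)(n^2-n)$, so $d(\underline 0, St_p) \ge \tfrac12(p-1)(n^2-n)$. For the matching upper bound I would exhibit an explicit path of that length: starting from $\underline 0$, repeatedly apply move (a) to raise $m_1$ to $p-1$ (using $p-1$ edges, each raising $f$ by $1$), then apply the sequence of moves (b) with $i=1,2,\dots$ in the right order to "shift" weight along the Dynkin diagram and build up $(p-1,p-1,0,\dots,0)$, then $(p-1,p-1,p-1,0,\dots,0)$, and so on. Each type-(b) move that goes from $m_i \ge 1$ to $m_{i+1}$ with $i < n-1$ raises $f$ by exactly $1$, so as long as the path only ever uses moves (a) and the $f$-increasing instances of (b), every edge contributes exactly $+1$ to $f$, and a path of length exactly $f(St_p) = \tfrac12(p-1)(n^2-n)$ reaching $St_p$ is optimal. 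One must check that such a monotone path actually exists inside the set of dominant weights, i.e. that at each stage the coordinate $m_i$ being decreased is positive; arranging the order of the shifts carefully makes this straightforward.

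The main obstacle I anticipate is the bookkeeping for the upper bound: one needs a clean description of an explicit walk from $\underline 0$ to $St_p$ that uses only $f$-increasing edges and stays within the dominant cone at every step. The natural approach is to build up the partition $(p-1,p-1,\dots,p-1)$ column by column (in the transpose picture, adding boxes one at a time), which corresponds to a specific interleaving of moves (a) and (b); verifying dominance along the way reduces to noting that one never removes a box from an empty column. Checking the inverse Cartan matrix entries for $A_{n-1}$, and the edge case $i = n-1$ in move (b), are the only other points requiring care, and both are routine.
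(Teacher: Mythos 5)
Your proposal is correct and takes essentially the same approach as the paper: part 1) by computing how the coefficient of $\alpha_{n-1}$ changes under each Littlewood--Richardson move using the bottom row of $C^{-1}$ (whose relevant entries are $i/n$), and part 2) by combining the resulting lower bound with an explicit path from $\underline{0}$ to $St_p$ that uses only $f$-increasing edges and hence has length exactly $f(St_p)=\frac{1}{2}(p-1)(n^2-n)$. The only (cosmetic) difference is the ordering of the explicit path: you fill the coordinates from left to right with interleaved restoring moves, whereas the paper proceeds in stages that fill the last entries first; both walks stay dominant and have the same length.
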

 
\begin{proof}
As in the statement of Lemma \ref{options} let $\lm = (m_1,\dots,m_{n-1})$ and assume $\mu$ takes the form in options $(a),(b)$ or $(c).$ Inspecting the bottom row of the Cartan matrix of type $A_{n-1}$ we see that the coefficient of $\alpha_{n-1}$ in $\lm$ written as a sum of roots is $\sum_{i=1}^{n-1} \frac{i}{n}m_i.$ Therefore $f(\mu) = f(\lm)+1$ if $\mu$ takes the form $(a)$ or $(b)$ and $f(\mu) = f(\lm)-(n-1)$ if $\mu$ takes the form $(c).$ In particular $f(\mu) \leq f(\lm)+1$ completing the proof of 1).

If follows from 1) that $d(\underline{0},St_p) \geq f(St_p) = \frac{1}{2}(p-1)(n^2-n)$ hence to show 2) it will suffice to find a path of length $\frac{1}{2}(p-1)(n^2-n)$ from $W_{\C}(\underline{0})$ to $W_{\C}(St_p).$ We start at the vertex $W_{\C}(\underline{0})$ and call this stage $0.$ At each stage $j>0$ we begin with a vertex $W_{\C}(\lm)$ where $\lm$ has $p-1$'s in the last $j-1$ entries and $0$'s in all other entries. As a first step we move along the edge corresponding to option $(a)$ in Lemma \ref{options} and then we repeatedly move along the edges described by option $(b)$ for $i=1$ to $i=n-j$. We repeat this step $p-1$ times resulting in a weight with $p-1$ in the $n-j$ entry. Terminating this process after $n-1$ stages will result in a weight in which every entry is $p-1$ as required.  

We illustrate the algorithm with the example $n=5, p=2$.  
\begin{align*}
         W_{\C}(0,0,0,0) &\xrightarrow{\textit{stage 1}} W_{\C}(1,0,0,0) \rightarrow W_{\C}(0,1,0,0) \rightarrow W_{\C}(0,0,1,0) \rightarrow W_{\C}(0,0,0,1)\\  &\xrightarrow{\textit{stage 2}}  W_{\C}(1,0,0,1)   \rightarrow W_{\C}(0,1,0,1) \rightarrow W_{\C}(0,0,1,1) \\ &\xrightarrow{\textit{stage 3}}  W_{\C}(1,0,1,1) \rightarrow W_{\C}(0,1,1,1) \\&\xrightarrow{\textit{stage 4}}  W_{\C}(1,1,1,1). 
\end{align*}
For each $0<j \leq n-1$ the $j$th stage requires passing along $(n-j)(p-1)$ edges and therefore the length of the path is $\frac{1}{2}(p-1)(n^2-n)$ as required. 

\end{proof} 

\section{\normalsize{EDGES IN THE MODULAR MCKAY GRAPH OF $SL_n(p)$}}
 
 In this section we will review some facts and introduce some notation that will allow us to prove in Lemma \ref{paths} the existence of some edges in the modular McKay graph of $SL_n(p).$
  
 Let $K = \overline{\F}_p$ and denote by $SL_n$ the algebraic group $SL_n(K).$ Let $G= SL_n(p)$ the fixed point group under the standard Frobenius map on $SL_n$ that sends $(a_{i,j}) \mapsto (a_{i,j}^p).$ Recall from the introduction that $\Gamma= \mathcal{M}_K(G,V_n)$ is the modular McKay graph of $G=SL_n(p)$ with respect to the standard $n$-dimensional $KG$-module $V_n$. We briefly describe the relationship between $\C SL_n(\C)$-,$ K SL_n$- and $KG$-modules in the following discussion.
 
 As previously let $W_{\C}(\lm)$ be the irreducible $\C H$-module indexed by a dominant weight $\lm.$ There exists a $\Z$-form $W_{\Z}(\lm)$ contained in $W_{\C}(\lm)$ such that $SL_n$ acts on $W(\lm) = W_{\Z}(\lm) \otimes K$ [see for example $\S 1.5$ in \cite{doty1985submodule}]. We call the $KSL_n$-module $W(\lm)$ the Weyl module corresponding to $\lm$. For each dominant weight $\lm$ the Weyl module $W(\lm)$ has a unique maximal submodule $M(\lm)$ such that the quotient $V(\lm)= W(\lm) / M(\lm)$ is an irreducible $KSL_n$-module. The composition factors of $M(\lm)$ are irreducible $KSL_n$-modules $V(\mu)$ such that $\mu < \lm$ under the dominance relation described in the Section $2$. The irreducible $KG$-modules are the restrictions of $V(\lm)$ for $p$-restricted weights $\lm.$ For simplicity we will abuse notation and refer to $V_n$ as the $\C H$-module $W_{\C}(1,0,\dots,0),$ the corresponding Weyl module $W(1,0,\dots,0)$ and also its restriction to $G.$ Note that $W(1,0,\dots,0)$ = $V(1,0,\dots,0)$ as there are no dominant weights subdominant to $(1,0,\dots,0).$
 
 The vertices of $\Gamma=\mathcal{M}_K(G,V_n) $ are therefore parametrised by the set of $p$-restricted dominant weights of length $n-1.$ For two $p$-restricted weights $\lm,\mu$ we write $\lm \rightarrow \mu$ if there is an edge in $\Gamma$ from $V(\lm)|_{G}$ to $V(\mu)|_G$ i.e. if $V(\mu)|_G$ is a composition factor of $V(\lm)|_{G} \otimes V_n.$
 
 Let us now make some definitions that are necessary to state Theorem B (iv) from \cite{brundan2000translation} which we then use to prove Lemma \ref{paths}. To a dominant weight $\lm$ we can associate a partition $\Tilde{\lm}$ of length $n$ such that $\lm_i = \Tilde{\lm}_i-\Tilde{\lm}_{i-1}$ and $\Tilde{\lm}_n=0.$ Let $\epsilon_i$ be the $n$-tuple whose entries are all $0$ except for a $1$ in position $i$. 
 
 We say $i$ is $\Tilde{\lm}$-\textit{addable} if $\Tilde{\lm}+\epsilon_i$ is a partition, that is $\Tilde{\lm}_{i+1} \leq \Tilde{\lm}_{i}+1 \leq \Tilde{\lm}_{i-1}$. We denote by $\Add(\Tilde{\lm})$ the set of $\Tilde{\lm}$-addable $i.$ Similarly we say $i$ is $\Tilde{\lm}$-\textit{removable} if $\Tilde{\lm}-\epsilon_i$ is a partition and denote by $\Rem(\Tilde{\lm})$ the set of $\Tilde{\lm}$-removable $i.$ 
 
 If $i$ is $\Tilde{\lm}$-addable then we say it is \textit{conormal} for $\Tilde{\lm}$ if there is an injection $g$ from the set $$ R_i = \{k \in \Rem(\Tilde{\lm}): 1\leq k<i, \hspace{0.3cm} \Tilde{\lm}_i+1-i \equiv \Tilde{\lm}_k-k \mod p\}$$ into the set $$ A_i = \{ k' \in \Add(\Tilde{\lm}):1\leq k'<i, \hspace{0.3cm} \Tilde{\lm}_i + 1-i \equiv \Tilde{\lm}_{k'}+1-k' \mod p\}$$ such that $g(k)>k$ for all $k \in R_i.$

Partitions $\Tilde{\mu}$ of length $n$ parametrise the irreducible $KGL_n(K)$-modules which we denote by $V'(\Tilde{\mu})$. Furthermore $GL_n(K)$ is the central product of  $ SL_n $ and $K^{\times}$ where the action of $K^{\times}$ on $V'(\Tilde{\mu})$ is the $\Tilde{\mu}_n$th power of the determinant. Restricting $KGL_n(K)$-modules to the subgroup $SL_n$ we may ignore the action $K^{\times}$ which is indicated in the partition by the $n$th entry. For a partition $\Tilde{\mu}$ of length $n$ we have $V'(\Tilde{\mu}_1,\Tilde{\mu}_2,\dots,\Tilde{\mu}_n)|_{SL_n}= V(\mu')$ where the partition corresponding to $\mu'$ is $(\Tilde{\mu}_1-\Tilde{\mu}_n,\Tilde{\mu}_2-\Tilde{\mu}_n,\dots,\Tilde{\mu}_{n-1}-\Tilde{\mu}_n).$ In particular we note $V'(1,0,\dots,0)|_{SL_n} = V_n.$ The following theorem is a re-statement of Theorem B(iv) in \cite{brundan2000translation}.

\begin{thm} \label{bk}
For a partition $\Tilde{\mu}$ of length $n,$ let $V'(\Tilde{\mu})$ be the corresponding irreducible $K GL_n(K)$-module. If $i$ is conormal for $\Tilde{\mu}$ then $V'(\Tilde{\mu}+\epsilon_i)$ occurs as a composition factor of $V'(\Tilde{\mu})\otimes V'(1,0,\dots,0).$
\end{thm}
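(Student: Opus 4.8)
This is a translation of Theorem~B(iv) of \cite{brundan2000translation} into the notation set up above, so the plan is one of bookkeeping rather than of fresh argument. First I would recall the shape of their result. Brundan and Kleshchev study the functor of tensoring a polynomial $KGL_n(K)$-module with the natural module $V'(1,0,\dots,0)$ together with its block decomposition: for an irreducible $V'(\Tilde{\mu})$ the tensor product $V'(\Tilde{\mu}) \otimes V'(1,0,\dots,0)$ splits as a direct sum of submodules, one for each residue class $r \bmod p$, the summand indexed by $r$ being the projection onto those blocks obtained from the block of $V'(\Tilde{\mu})$ by adding a single box of content-residue $r$. Their Theorem~B describes the head, socle and relevant composition factors of each such summand; part (iv) is precisely the statement that $V'(\Tilde{\mu}+\epsilon_i)$ occurs as a composition factor of the appropriate summand exactly when $i$ is conormal.

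The argument then has two ingredients. The first is the observation that $V'(\Tilde{\mu}+\epsilon_i)$ can only appear in the summand indexed by $r \equiv \Tilde{\mu}_i + 1 - i \bmod p$, since the partition $\Tilde{\mu}+\epsilon_i$ is obtained from $\Tilde{\mu}$ by adding the box in row $i$ and column $\Tilde{\mu}_i+1$, whose content is $\Tilde{\mu}_i + 1 - i$; once we know that $V'(\Tilde{\mu}+\epsilon_i)$ is a composition factor of that summand it is then also a composition factor of the full tensor product $V'(\Tilde{\mu}) \otimes V'(1,0,\dots,0)$, which is all that is claimed. The second ingredient is to check that the notion of ``conormal'' defined just before the statement, via the existence of an injection $g \colon R_i \hookrightarrow A_i$ with $g(k)>k$ for all $k$, coincides with the combinatorial criterion appearing in \cite{brundan2000translation}: the sets $R_i$ and $A_i$ are, respectively, the removable and addable nodes of $\Tilde{\mu}$ lying strictly above row $i$ whose content is congruent modulo $p$ to that of the box added in row $i$, and the order-preserving-below injection condition says exactly that, in the standard signed cancellation of the addable and removable nodes of this residue, the addable node in row $i$ is not deleted. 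Once this equivalence is in place the theorem is immediate.

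The only real obstacle is this last definitional matching, together with the conventions around it. One must check that rows are indexed from the top in both formulations, that residues are read off via contents of boxes with a consistent sign, and that Brundan and Kleshchev's labelling of irreducible polynomial $KGL_n(K)$-modules by length-$n$ partitions agrees with the partition/weight dictionary $\lm_i = \Tilde{\lm}_i - \Tilde{\lm}_{i-1}$ used here, so that no shift or reversal is introduced in passing between the two descriptions. With these conventions reconciled, there is nothing further to prove.
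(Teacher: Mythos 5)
Your proposal matches the paper's treatment: the paper offers no independent proof of this statement, presenting it purely as a restatement of Theorem B(iv) of \cite{brundan2000translation} in the notation already set up, which is exactly the citation-plus-convention-checking you describe. Your additional care about residues, row indexing, and the equivalence of the injection formulation of ``conormal'' with the signed cancellation criterion is sound bookkeeping but does not constitute a different route.
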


The following result describes some edges in $\Gamma.$

\begin{lem}\label{paths}
If $\lm$ and $\mu$ are $p$-restricted weights satisfying one of the following three conditions, then $\lm \rightarrow \mu.$
\begin{enumerate}[1)]
    \item $\lm$ is any $p$-restricted weight and $\mu_1$ is the unique element of the set $\{1,\dots,p-1\}$ such that $\mu_1 \equiv \lm_1 + 1 \mod p-1,$ and $\mu_i=\lm_i$ for $i \neq 1.$
    \item $\lm$ is a $p$-restricted weight whose first nonzero entry is $\lm_s$ for some $1\leq s < n-1$, $\mu_s=\lm_s-1$ and $\mu_{s+1}$ is the  unique element of the set $\{1,\dots,p-1\}$ such that $\mu_{s+1} \equiv \lm_{s+1} + 1 \mod p-1,$ and $\mu_i=\lm_i$ for $i \neq s, s+1.$
    \item $\lm$ is a $p$-restricted weight whose first nonzero entry is $\lm_{n-1}$,  $\mu_{n-1}=\lm_{n-1}-1$ and $\mu_i = \lm_i =0 $ for $i \neq n-1.$
\end{enumerate}
\end{lem}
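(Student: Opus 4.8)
The plan is to realise each of the three edges by applying Theorem~\ref{bk} to the partition $\Tilde{\lambda}$ of length $n$ attached to $\lambda$ (so that $\Tilde{\lambda}_i=\lambda_i+\cdots+\lambda_{n-1}$ for $i<n$ and $\Tilde{\lambda}_n=0$), adding a single box in a suitable row $i$, and then descending from $KGL_n(K)$-modules to $KSL_n$-modules to $KG$-modules. The row will be $i=1$ in case 1), $i=s+1$ in case 2), and $i=n$ in case 3). The first thing to check is that this $i$ is conormal for $\Tilde{\lambda}$. In each case $i$ is plainly addable, and the key point is that the set $R_i$ in the definition of conormality is \emph{empty}, so that the required injection $R_i\to A_i$ is automatic. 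This is precisely where the hypotheses enter: for $i=1$ there is no $k$ with $k<1$; for $i=s+1$ in case 2), the assumption that $\lambda_s$ is the first nonzero entry of $\lambda$ forces $s$ to be the only removable node of $\Tilde{\lambda}$ below $s+1$, and $s\notin R_{s+1}$ because the congruence defining $R_{s+1}$ unwinds to $\lambda_s\equiv 0\pmod p$, impossible for a nonzero $p$-restricted entry; case 3) is the same computation with $n-1$ and $\lambda_{n-1}$ in place of $s$ and $\lambda_s$. These verifications, though they must be made case by case, are routine.

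Granting conormality, Theorem~\ref{bk} gives that $V'(\Tilde{\lambda}+\epsilon_i)$ is a composition factor of $V'(\Tilde{\lambda})\otimes V'(1,0,\dots,0)$. Restricting to $SL_n$, and using $V'(1,0,\dots,0)|_{SL_n}=V_n$ together with $V'(\Tilde{\lambda})|_{SL_n}=V(\lambda)$, we see that $V'(\Tilde{\lambda}+\epsilon_i)|_{SL_n}$ is a composition factor of the $KSL_n$-module $V(\lambda)\otimes V_n$. A short calculation identifies the dominant weight $\gamma$ with $V'(\Tilde{\lambda}+\epsilon_i)|_{SL_n}=V(\gamma)$: one finds $\gamma=(\lambda_1+1,\lambda_2,\dots,\lambda_{n-1})$ in case 1), $\gamma=(0,\dots,0,\lambda_s-1,\lambda_{s+1}+1,\lambda_{s+2},\dots,\lambda_{n-1})$ in case 2), and $\gamma=\mu$ in case 3) (the determinant twist produced by adding a box in the last row is absorbed on restriction to $SL_n$). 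Whenever the entry being increased is still at most $p-1$ — that is, in case 3), and in cases 1) and 2) when $\lambda_1<p-1$, resp.\ $\lambda_{s+1}<p-1$ — we have $\gamma=\mu$; since $\lambda$ and $\mu$ are then both $p$-restricted, restricting the $SL_n$-statement to $G$ exhibits the irreducible $V(\mu)|_G$ as a composition factor of $V(\lambda)|_G\otimes V_n$, which is exactly $\lambda\rightarrow\mu$.

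The remaining subcases — $\lambda_1=p-1$, $\mu_1=1$ in 1), and $\lambda_{s+1}=p-1$, $\mu_{s+1}=1$ in 2) — are the main obstacle, because here $\gamma$ has a single entry equal to $p$ and so is no longer $p$-restricted, and no edge of $\Gamma$ can be read off it directly. My device for this is that the standard Frobenius is the identity on $G=SL_n(p)$, so for the Frobenius twist one has $N^{[1]}|_G\cong N|_G$ for every $KSL_n$-module $N$. I would write the Steinberg decomposition $\gamma=\gamma^{(0)}+p\gamma^{(1)}$, where $\gamma^{(1)}$ is the relevant fundamental weight — $(1,0,\dots,0)$ in case 1), the fundamental weight with a single $1$ in position $s+1$ in case 2) — and $\gamma^{(0)}$ is the obvious $p$-restricted weight, and note $\mu=\gamma^{(0)}+\gamma^{(1)}$. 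By the Steinberg tensor product theorem and the triviality of the Frobenius twist on $G$, $V(\gamma)|_G=V(\gamma^{(0)})|_G\otimes V(\gamma^{(1)})^{[1]}|_G=V(\gamma^{(0)})|_G\otimes V(\gamma^{(1)})|_G$. Since $\mu=\gamma^{(0)}+\gamma^{(1)}$ is the highest weight of $V(\gamma^{(0)})\otimes V(\gamma^{(1)})$, the product of the two highest-weight vectors generates a $KSL_n$-submodule whose head is $V(\mu)$, so $V(\mu)$ is a composition factor of $V(\gamma^{(0)})\otimes V(\gamma^{(1)})$; restricting to $G$, $V(\mu)|_G$ is a composition factor of $V(\gamma^{(0)})|_G\otimes V(\gamma^{(1)})|_G=V(\gamma)|_G$. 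Finally $V(\gamma)$ is a composition factor of the $KSL_n$-module $V(\lambda)\otimes V_n$ and restriction is exact, so $V(\mu)|_G$ is a composition factor of $V(\lambda)|_G\otimes V_n$; this would give $\lambda\rightarrow\mu$ in these cases as well and finish the argument.
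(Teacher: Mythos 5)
Your proposal is correct and follows essentially the same route as the paper: both add a box in row $1$, $s+1$, or $n$ via Theorem \ref{bk} (checking conormality by showing the relevant set $R_i$ is empty, using that $\lm_s\not\equiv 0 \bmod p$ for a nonzero $p$-restricted entry), identify the resulting $SL_n$-weight, and handle the wrap-around subcases ($\lm_1=p-1$ or $\lm_{s+1}=p-1$) by Steinberg's tensor product theorem together with the triviality of the Frobenius twist on $G=SL_n(p)$. The only difference is cosmetic: you spell out the highest-weight-vector argument showing $V(\mu)$ occurs in $V(\gamma^{(0)})\otimes V(\gamma^{(1)})$, which the paper leaves implicit.
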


\begin{proof}
To show $\lm \rightarrow \mu$ it is enough to show that there is some weight $\mu'$ such that $V(\mu')$ is a composition factor of $V(\lm) \otimes V_n$ and $V(\mu)|_G$ is a composition factor of $V(\mu')|_G.$ By Theorem \ref{bk} if $i$ is conormal for $\Tilde{\lm}$ then $V'(\Tilde{\lm}+ \epsilon_i)|_{SL_n}$ is a composition factor of $V'(\Tilde{\lm})|_{SL_n} \otimes V_n$.  Furthermore as discussed above $V'(\Tilde{\lm})|_{SL_n} \otimes V'(1,0,\dots,0)_{SL_n} = V(\lm) \otimes V_n.$ Therefore we will look for composition factors of $V'(\Tilde{\lm}+ \epsilon_i)|_{SL_n}$ for $i$ conormal for $\Tilde{\lm}.$

We begin by defining two possible values of $i$ that are always conormal for $\Tilde{\lm}.$ Grouping consecutive entries of $\Tilde{\lm}$ with the same values we can write $\Tilde{\lm}$ in block form. That is, we write $\Tilde{\lm}= (\Tilde{\lm}_1^{a_1},\Tilde{\lm}_{1+a_1}^{a_2}, \dots)$ where the first $a_1$ entries all have value $\Tilde{\lm}_1$ and the next $a_2$ entries have value $\Tilde{\lm}_{1+a_1} \neq \Tilde{\lm}_1$ and so on. We claim that $1$ and $1+a_1$ are always conormal for $\Tilde{\lm}.$ To see this for $1$ is trivial since there can be no  $j \in Rem(\Tilde{\lm})$ such that $j<1.$ To see that $1+a_1$ is conormal note that the first $a_1$ entries have the same value, so $a_1$ is the only element of $Rem(\Tilde{\lm})$ strictly smaller than $1+a_1.$ However $\Tilde{\lm}_{a_1}$ and $\Tilde{\lm}_{1+a_1}$ are strictly less than $p$ and $\Tilde{\lm}_{a_1} \neq \Tilde{\lm}_{1+a_1},$ so $\lm_{a_1} \not\equiv \lm_{a_1+1} \mod p.$ Therefore the condition for $1+a_1$ to be conormal is met.

We now show that if $1)$ holds then $\lm \rightarrow \mu$. By the remarks above $V'(\Tilde{\lm}+ \epsilon_1)|_{SL_n}$ is a composition factor of $V(\lm)\otimes V_n$.  Since $\Tilde{\lm}_{n}=0$ there exists some weight $\mu'$ such that $V'(\Tilde{\lm}+ \epsilon_1)|_{SL_n} = V(\mu').$ In particular $\mu'_1 = \lm_1+1$ and $\mu'_i=\lm_i$ for $i \neq 1.$ Since $\lm$ is $p$-restricted $\mu'$ is $p$-restricted unless $\lm_1=p-1.$ If $\mu'$ is $p$-restricted then $\mu'=\mu$ and we have $\lm \rightarrow \mu.$ If $\lm_1=p-1$ then $\mu'_1=p$ and by Steinburg's tensor product theorem we can write $V(\mu') = V(0,\mu'_2,\dots,\mu'_{n-1}) \otimes V(1,0,\dots,0)^{( p)}.$ Restricting to $G$ this gives us the $KG$-module $(V(0,\mu'_2,\dots,\mu'_{n-1}) \otimes V(1,0,\dots,0))|_{G}.$ Furthermore we note $\mu =(1, \mu'_2, \dots, \mu'_{n-1})$ and therefore $V(\mu)|_G$ is a composition factor of $(V(0,\mu'_2,\dots,\mu'_{n-1}) \otimes V(1,0,\dots,0))|_{G}$ and hence $\lm \rightarrow \mu.$  

Now suppose $2)$ holds. Since $\Tilde{\lm}_i=\Tilde{\lm}_{i+1}$ for $1 \leq i \leq a_1$ but $\Tilde{\lm}_{a_1}\neq \Tilde{\lm}_{a_1+1}$ we have that $s=a_1.$ Therefore $a_1+1<n$ and hence there exists some dominant weight $\mu'$ such that $V(\mu') = V'(\Tilde{\lm}+ \epsilon_{a_1+1})|_{SL_n}.$ In particular $\mu'_{a_1} = \lm_{a_1} -1$, $\mu'_{a_1+1} = \lm_{a_1+1} +1$ and $\lm_i = \mu_i$ for $i \neq a_1,a_1+1.$ Applying the same arguments as above we see that if $\lm_s<p-1$ then $\mu'$ is $p$-restricted and $\mu = \mu'$ and therefore $\lm \rightarrow \mu$. Otherwise if $\lm_s =p-1$ we have $\mu=(\mu'_1,\dots,\mu'_{s}-1,1,\mu'_{s+2},\dots, \mu'_{n-1})$ and therefore $V(\mu)|_G$ is a composition factor of $V(\mu')|_G$ implying $\lm \rightarrow \mu.$

Finally we suppose $3)$ holds. As remarked above if $s=n-1$ is the position of the first non-zero entry in $\lm$ then $a_1+1=n.$ Therefore $\Tilde{\lm} + \epsilon_{a_1+1}=(\Tilde{\lm}_{1},\Tilde{\lm}_{2},\dots, \Tilde{\lm}_{n-1}, 1).$ Taking $\mu'$ to be the weight such that $V(\mu') = V'(\Tilde{\lm} + \epsilon_{a_1+1})|_{SL_n}$ we have $\Tilde{\mu'}=(\Tilde{\lm}_1-1,\Tilde{\lm}_2 -1,\dots, \Tilde{\lm}_{n-1}-1,0).$ Therefore $\mu'_{n-1}=\lm_{n-1}-1$ and $\mu'_i = \lm_i =0$ for $1 \leq i <n-1$ and hence $\mu=\mu'.$ Therefore we have $\lm \rightarrow \mu$ completing the proof. 
\end{proof}

\begin{rem}\label{moving}
In the proofs in the next section we will refer to moving along an edge from $V(\lm)|_G$ to $V(\mu)|_G$ where $\lm$ and $\mu$ are as in part 1) of Lemma \ref{paths}, as adding $1$ to the first entry of $\lm.$ Likewise for $\lm$ and $\mu$ as in part 2) we will refer to this move as clearing a $1$ from the first nonzero entry of $\lm$ and adding it to the next entry. We will also refer to this process as moving $1$ along. Finally for $\lm$ and $\mu$ as in part 3) of Lemma \ref{paths} we will refer to this move as clearing $1$ from the entry in position $n-1$ of $\lm.$
\end{rem}

\section{\normalsize{THE DIAMETER OF $\Gamma$}}

In this section we will combine results from Section 2 and Section 3 to prove Theorem \ref{mainresult}. Recall that $G=SL_n(p),$ $K$ is an algebraically closed field of characteristic $p$ and Theorem \ref{mainresult} concerns the McKay graph $\Gamma=\mathcal{M}_K(G,V_n)$ where $V_n$ is the standard $KG$-module.

\begin{customthm}{1.1}
The diameter of the modular McKay graph of $SL_n(p)$ with respect to its standard module $V_n$ is $\frac{1}{2}(p-1)(n^2-n).$
\end{customthm} 

The proof of Theorem \ref{mainresult} follows immediately  from Lemma \ref{short} in which we show $\frac{1}{2}(p-1)(n^2-n)$ is a lower bound for the diameter of $\Gamma$ and Lemma \ref{long} in which we show this is also an upper bound. For a weight $\lm$ recall the integer $f(\lm)$ from Definition \ref{f} in Section 2. In order to prove Lemma \ref{short} we will need the following result. 

\begin{prop}\label{tool}
 Let $\mu$ be a $p$-restricted weight and $\mu'$ a dominant weight for $SL_n(K)$. If $V(\mu)|_G$ is a composition factor of $V(\mu')|_G$ then $f(\mu) \leq f(\mu').$
\end{prop}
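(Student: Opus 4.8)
The plan is to reduce to the case that $\mu'$ is $p$-restricted --- where the statement is immediate since $V(\mu')|_G$ is then irreducible, forcing $\mu=\mu'$ --- using Steinberg's tensor product theorem, and to keep track of $f$ throughout by exploiting that $f$ is a nonnegative linear functional of the weight coordinates which is monotone for the dominance order.

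First I would make $f$ completely explicit. As recorded in the proof of Lemma~\ref{char0result}, if $\lm=(m_1,\dots,m_{n-1})$ then the coefficient of $\alpha_{n-1}$ in $\lm$ written as a sum of simple roots is $\sum_{i=1}^{n-1}\tfrac{i}{n}m_i$, so $f(\lm)=\sum_{i=1}^{n-1} i\,m_i$. From this I extract three facts: (i) $f(\lm)\ge 0$ for every dominant weight $\lm$, with $f(\lm)\ge 1$ whenever $\lm\neq\underline 0$; (ii) $f$ is additive for the coordinatewise addition of dominant weights; and (iii) if $\nu\le\sigma$ in the dominance order then $f(\nu)\le f(\sigma)$, because $f=n\,c_{n-1}$ and passing from $\sigma$ to $\nu$ subtracts a nonnegative integer combination of simple roots, which cannot increase the coefficient of $\alpha_{n-1}$.

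Next I would unwind the module structure. Writing the $p$-adic expansion $\mu'=\sum_{j\ge 0}p^j\beta_j$ with each $\beta_j$ $p$-restricted, Steinberg's tensor product theorem gives $V(\mu')\cong\bigotimes_j V(\beta_j)^{(p^j)}$ as $K SL_n$-modules; since the Frobenius map fixes $G=SL_n(p)$ pointwise, every twist becomes trivial on restriction and $V(\mu')|_G\cong N|_G$, where $N:=\bigotimes_j V(\beta_j)$ is an (untwisted) $K SL_n$-module. Every $T$-weight of $N$ is $\le\sigma:=\sum_j\beta_j$ in the dominance order, so every $K SL_n$-composition factor $V(\nu)$ of $N$ has $\nu\le\sigma$; combining (i)--(iii) this yields $f(\nu)\le f(\sigma)=\sum_j f(\beta_j)\le\sum_j p^j f(\beta_j)=f(\mu')$, with the final inequality strict unless $\mu'$ is itself $p$-restricted.

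Finally I would run a strong induction on the nonnegative integer $f(\mu')$. If $\mu'$ is $p$-restricted the statement holds as above. Otherwise, refining a $K SL_n$-composition series of $N$ after restriction to $G$ exhibits $V(\mu)|_G$ as a $KG$-composition factor of $V(\nu)|_G$ for some $K SL_n$-composition factor $V(\nu)$ of $N$; since $f(\nu)<f(\mu')$, the induction hypothesis, applied with $\nu$ in place of $\mu'$, gives $f(\mu)\le f(\nu)$, and hence $f(\mu)\le f(\nu)\le f(\sigma)\le f(\mu')$. I do not anticipate a genuine difficulty; the points needing care are the formal ones --- that the Frobenius twists really do vanish after restriction to the finite group $G$, that the $KG$-composition factors of a restricted module are exactly those obtained by refining the restricted composition series, and that $\sigma-\nu$ lies in the root lattice so that (iii) legitimately applies to it.
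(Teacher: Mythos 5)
Your argument is correct and follows essentially the same route as the paper's proof: Steinberg's tensor product theorem plus the triviality of Frobenius twists on $G$, passing to an $SL_n(K)$-composition factor of the untwisted tensor product, bounding its highest weight by $\sum_j \beta_j$ in the dominance order, and using additivity and monotonicity of $f$ together with an induction to handle non-restricted weights. The only (harmless) difference is that you induct on the integer $f(\mu')$ itself, using $f\geq 1$ on nonzero dominant weights for strictness, whereas the paper inducts on the auxiliary quantity $S(\mu')=\sum_i \mu'_i$.
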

 
\begin{proof}
For a weight $\lm = (\lm_1,\dots,\lm_{n-1})$ let $S(\lm) = \sum_{i=1}^{n-1} \lm_i$. Note that if $\lambda$ and $\nu$ are weights such that $\lm - \nu$ written as a sum of roots corresponds to the $n-1$-tuple $(c_1,\dots,c_{n-1})$ then applying the Cartan matrix of type $A_{n-1}$ we see $S(\lm-\nu) = c_1+c_{n-1}$. Therefore if $\nu$ is subdominant to  $\lm$ then $S(\nu) \leq S(\lm)$. We will prove Proposition \ref{tool} by induction on $S(\mu')$. Let $\mu,\mu'$ be as in the statement of the Proposition and observe that if $\mu'$ is $p$-restricted then $\mu = \mu'$ and hence $f(\mu) = f(\mu').$ We will now prove the inductive step. Suppose for all weights $\gamma$ such that $S(\gamma)<S(\mu')$ the proposition holds. By Steinberg's tensor product theorem we may write $V(\mu') = \bigotimes_i V(\nu_i)^{(p^{i-1})}$ and therefore $V(\mu')|_{G} = \bigotimes_i V(\nu_i)|_{G}.$ Since we assume that $V(\mu)|_G$ is a composition factor of $V(\mu')|_{G} = \bigotimes_i V(\nu_i)|_G$ there exists a dominant weight $\gamma$ such that $V(\gamma)$ is a composition factor of $\bigotimes_i V(\nu_i)$ and $V(\mu)|_G$ is a composition factor of $V(\gamma)|_G.$ Note that $\gamma \leq \sum_i \nu_i$ and therefore $f(\gamma) \leq f(\sum_i \nu_i) \leq f(\mu')$ and $S(\gamma) \leq S(\sum_i \nu_i)$. Since $\mu'$ is not $p$-restricted we see $S(\sum_i \nu_i) = \sum_i S(\nu_i) < S(\mu')$. Therefore by our inductive assumption $f(\mu) \leq f(\gamma)$. Since $f(\gamma) \leq f(\mu')$ this completes the inductive step.  
\end{proof}

For two $p$-restricted dominant weights, $\lm$ and $\mu$ let $d_{\Gamma}(\lm,\mu)$ denote the the distance from $V(\lm)|_G$ to $V(\mu)|_G$ in $\Gamma.$ Furthermore let $St_p$ denote the weight $(p-1,\dots,p-1)$ of $SL_n(K).$

\begin{lem}\label{short}
In the graph $\Gamma$ we have $d_{\Gamma}(\underline{0},St_p) = \frac{1}{2}(p-1)(n^2-n).$ Hence this is a lower bound on the diameter of $\Gamma.$
\end{lem}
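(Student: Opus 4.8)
The plan is to prove the two inequalities $d_\Gamma(\underline 0, St_p) \geq \frac12(p-1)(n^2-n)$ and $d_\Gamma(\underline 0, St_p) \leq \frac12(p-1)(n^2-n)$ separately, using $f$ as a monovariant for the lower bound and the edges constructed in Lemma \ref{paths} for the upper bound.

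For the lower bound I would argue that $f$ cannot increase by more than $1$ along any edge of $\Gamma$. Suppose $\lm \rightarrow \mu$ in $\Gamma$, so $V(\mu)|_G$ is a composition factor of $V(\lm)|_G \otimes V_n$. Now $V(\lm) \otimes V_n = V(\lm)\otimes W(1,0,\dots,0)$ is a $KSL_n$-module with a Weyl filtration whose factors $W(\nu)$ correspond exactly to the weights $\nu$ of Lemma \ref{options}, namely options (a), (b), (c) applied to $\lm$; each such factor $W(\nu)$ has composition factors $V(\gamma)$ with $\gamma \leq \nu$. Hence there is a dominant weight $\gamma$, with $\gamma \leq \nu$ for one of the Lemma \ref{options} weights $\nu$, such that $V(\mu)|_G$ is a composition factor of $V(\gamma)|_G$. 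By the computation in the proof of Lemma \ref{char0result}, $f(\nu) \leq f(\lm) + 1$ for every option (it equals $f(\lm)+1$ in cases (a),(b) and $f(\lm)-(n-1)$ in case (c)). Since $\gamma \leq \nu$ and $f$ is (by its definition as $n$ times the $\alpha_{n-1}$-coefficient, together with the fact that $\nu - \gamma$ is a nonnegative sum of simple roots, so the $\alpha_{n-1}$-coefficient can only decrease) monotone under the dominance order, $f(\gamma) \leq f(\nu) \leq f(\lm)+1$. Finally, Proposition \ref{tool} gives $f(\mu) \leq f(\gamma)$. Chaining these, $f(\mu) \leq f(\lm)+1$. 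Since $f(\underline 0) = 0$ and $f(St_p) = \frac12(p-1)(n^2-n)$, any path from $\underline 0$ to $St_p$ in $\Gamma$ must have length at least $\frac12(p-1)(n^2-n)$.

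For the upper bound I would exhibit an explicit path of exactly that length, mirroring the $\C$-construction in the proof of Lemma \ref{char0result} but now using the moves of Lemma \ref{paths} (``adding $1$ to the first entry'' and ``moving $1$ along''), as codified in Remark \ref{moving}. Since $p$ is the characteristic, adding $1$ to the first entry always lands back in $\{1,\dots,p-1\}$ (the congruence is mod $p-1$, and the relevant entries never reach $p$ because we build up entry by entry and each reset wraps $p-1 \mapsto 1$), so every move stays among $p$-restricted weights and is a genuine edge of $\Gamma$. Stage $j$ (for $1 \leq j \leq n-1$) starts at a weight with $p-1$ in the last $j-1$ coordinates and $0$ elsewhere; we apply ``add $1$ to the first entry'' followed by ``move $1$ along'' down to position $n-j$, repeated $p-1$ times, consuming $(n-j)(p-1)$ edges and ending with $p-1$ in position $n-j$ as well. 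After $n-1$ stages we reach $St_p$, having used $\sum_{j=1}^{n-1}(n-j)(p-1) = \frac12(p-1)(n^2-n)$ edges. Combining the two bounds gives $d_\Gamma(\underline 0, St_p) = \frac12(p-1)(n^2-n)$, and since both endpoints are vertices of $\Gamma$ this is a lower bound for the diameter.

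The main obstacle is the lower bound: I need to be careful that the factors appearing in $V(\lm)\otimes V_n$ really are controlled by the characteristic-zero Lemma \ref{options} weights, i.e. that tensoring a Weyl module with the natural module gives a Weyl-filtered module with the expected factors (true since $V_n$ is a tilting module and $\operatorname{ch}$ is the same as in characteristic zero), and that $f$ is genuinely monotone under the dominance order so that passing from $\nu$ down to its composition factor $\gamma$ cannot increase $f$. The upper-bound path is essentially the one already written out for $\Gamma_\C$, so the only thing to check there is that the wrap-around in Lemma \ref{paths}(1),(2) never produces a non-$p$-restricted weight along the way, which the staged construction guarantees.
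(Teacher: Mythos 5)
Your proposal follows essentially the same route as the paper: the lower bound via the monovariant $f$, Proposition \ref{tool} and the characteristic-zero computation behind Lemma \ref{char0result}, and the upper bound by lifting the explicit path from the proof of Lemma \ref{char0result} through the moves of Lemma \ref{paths}. The upper-bound half is fine as written: the staged path never wraps around modulo $p-1$, every intermediate weight is $p$-restricted, and consecutive weights satisfy conditions 1) or 2) of Lemma \ref{paths} --- this is exactly the paper's argument.

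One step in your lower bound is false as stated: $V(\lm)\otimes V_n$ does not in general have a Weyl filtration whose factors are exactly the $W(\nu)$ for $\nu$ as in Lemma \ref{options}. A dimension count already rules this out whenever $V(\lm)\subsetneq W(\lm)$, since then $\dim\bigl(V(\lm)\otimes V_n\bigr)<\dim\bigl(W(\lm)\otimes V_n\bigr)=\sum_{\nu}\dim W(\nu)$; moreover an irreducible module tensored with a tilting module need not be Weyl-filtered at all. What you actually need, and what the paper does, is that $V(\lm)\otimes V_n$ is a quotient of $W(\lm)\otimes V_n$, so every composition factor $V(\gamma)$ of $V(\lm)\otimes V_n$ is one of $W(\lm)\otimes V_n$; the latter does have the expected factors $W(\eta)$ with $\lm\rightarrow_{\C}\eta$, whence $\gamma\leq\eta$, $f(\gamma)\leq f(\eta)\leq f(\lm)+1$ by monotonicity of $f$ under dominance, and Proposition \ref{tool} gives $f(\mu)\leq f(\gamma)$. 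Your closing caveat (``tensoring a Weyl module with the natural module'') is the correct statement, but the body of your argument applies it to $V(\lm)$ rather than $W(\lm)$, so that step would fail as written; with this one substitution your proof coincides with the paper's.
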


\begin{proof}
As in the proof of Lemma \ref{char0result} we will first show that if $\lm$ and $\mu$ are two $p$-restricted weights such that $\lm \rightarrow \mu$ then $f(\mu) \leq f(\lm) + 1$. This provides the desired lower bound on $d(\underline{0},St_p)$ since $f(\underline{0}) =0$ and $f(St_p)= \frac{1}{2}(p-1)(n^2-n).$ Since $\lm \rightarrow \mu$ there exists a dominant weight $\mu'$ such that $V(\mu')$ is a composition factor of  $V(\lm) \otimes V_n$ and $V(\mu)|_G$ is a composition factor of $V(\mu')|_G.$ By Proposition \ref{tool}, $f(\mu) \leq f(\mu')$ and so it remains to show that $f(\mu') \leq f(\lm) +1$. Observe that the $KSL_n$-module $W(\lm) \otimes V_n$ must contain the composition factors of $V(\lm) \otimes V_n,$ one of which is $V(\mu')$. Therefore $\mu'$ is subdominant to some weight $\eta$ where the composition factors of the Weyl module $W(\eta)$ are composition factors of $W(\lm) \otimes V_n.$ For such an $\eta$ we have $\lm \rightarrow_{\C} \eta$ and hence by Lemma \ref{char0result}, $f(\eta) \leq f(\lm)+1.$ Finally since $\mu' \leq \eta$ we have $f(\mu')\leq f(\lm) +1.$

It now remains to show that a path of length $\frac{1}{2}(p-1)(n^2-n)$ from $V(\underline{0})|_G$ to $V(St_p)|_G$ exists. Recall the path described in the proof of Lemma \ref{char0result}. Note that any two consecutive vertices in that path are irreducible $\C H$-modules whose highest weights $\lm$ and $\mu$ are $p$-restricted. Furthermore each such $\lm$ and $\mu$ satisfy one of the first two conditions in Lemma \ref{paths}. Therefore $V(\lm)|_G$ and $V(\mu)|_G$ are vertices in $\Gamma$ and by Lemma \ref{paths}, $\lm \rightarrow \mu.$ This describes a path in $\Gamma$ from $V(\underline{0})|_G$ to $V(St_p)|_G$ of length $\frac{1}{2}(p-1)(n^2-n)$. 

\end{proof}

We now complete the proof of Theorem \ref{mainresult} by showing that $\frac{1}{2}(p-1)(n^2-n)$ is also an upper bound for the diameter of $\Gamma.$ For convenience we will refer to the vertices of $\Gamma$ by the $p$-restricted dominant weights that parametrise them. 

\begin{lem}\label{long}

For any two $p$-restricted weights $\lm,\mu$ we have $d_{\Gamma}(\lm,\mu) \leq \frac{1}{2}(p-1)(n^2-n).$ Hence this is an upper bound for the diameter of $\Gamma$.

\end{lem}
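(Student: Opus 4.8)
The plan is to exhibit, for every pair of $p$-restricted weights $\lambda,\mu$, an explicit directed path in $\Gamma$ from $\lambda$ to $\mu$ of length at most $\frac12(p-1)(n^2-n)$, assembled from the three moves of Lemma \ref{paths} as codified in Remark \ref{moving}: adding $1$ to the first entry, moving a $1$ along, and clearing a $1$ from position $n-1$.

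The backbone is the observation, essentially the construction in the proofs of Lemma \ref{char0result} and Lemma \ref{short}, that from $\underline 0$ one reaches any $p$-restricted $\mu$ in exactly $f(\mu)=\sum_{i=1}^{n-1} i\,\mu_i$ steps: build the entries of $\mu$ one at a time from the right, delivering each unit to entry $i$ by adding $1$ to the first entry and then moving it along $i-1$ times, so that every step raises $f$ by $1$ and every intermediate weight is $p$-restricted. Since $f(\mu)\le f(St_p)=\frac12(p-1)(n^2-n)$, the second leg of a path through $\underline 0$ already has the right length; the difficulty is reaching a convenient weight from $\lambda$ cheaply enough that the two legs together stay within budget.

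Accordingly I would proceed in three stages. First, reduce the first entry of $\lambda$: since adding $1$ to the first entry cycles through $\{1,\dots,p-1\}$, a large first entry can be brought down to $1$ in few steps --- in a single step when it equals $p-1$ --- so we may assume $\lambda_1$ small at small cost. Second, sweep the remaining mass rightward by repeatedly moving the leftmost $1$ along, consolidating it into position $n-1$ and discarding any surplus there by clearing a $1$ from position $n-1$, until we reach a weight of the shape $(0,\dots,0,t)$ with $0\le t\le p-1$. Third, build $\mu$ from $(0,\dots,0,t)$ as in the backbone, the value $t$ already in position $n-1$ reducing the cost of filling that position.

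The hard part is the bookkeeping that makes these stages fit within $\frac12(p-1)(n^2-n)$ simultaneously, and this is where a case analysis enters. Two features force it. First, the modular reset in the moves means a naive rightward sweep through a nearly full entry wastes steps --- a full cycle of a position costs $p-1$ steps for no net gain --- so one needs the first-entry reduction and, for the interior entries, a dichotomy between pushing an entry forward and letting the reset cycle it down. Second, the cost of the sweep must be charged against the entries of $\mu$ it helps to fill and against the surplus $f(St_p)-f(\mu)$, not paid for on its own; in the extreme regimes ($\lambda$ near $St_p$, or $\mu$ near $St_p$) one abandons the three-stage route and travels almost directly, adding or deleting only what is strictly necessary. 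Checking that some route always stays within budget is, I expect, the bulk of the argument.
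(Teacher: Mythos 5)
Your outline has the right backbone (the length-$f(\mu)$ build-up from $\underline{0}$ using the moves of Lemma \ref{paths}, and the recognition that the cost of leaving $\lambda$ must be absorbed somehow), but it stops exactly where the lemma's actual content begins: no intermediate target is specified, no step counts are carried out, and you yourself defer ``the bulk of the argument.'' Worse, the concrete route you do propose --- reduce the first entry, sweep everything into position $n-1$ to reach a weight $(0,\dots,0,t)$, then rebuild $\mu$ --- genuinely exceeds the budget in reachable cases. The sweep costs $\sum_j S_j$, where $S_j$ is the residue in $\{1,\dots,p-1\}$ of the partial sum $\lambda_1+\dots+\lambda_j$ at the moment position $j$ is cleared, while the saving on the rebuild is only $(n-1)t$ with $t$ the final residue at position $n-1$; the modular resets can make every $S_j$ large while $t$ is tiny. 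For instance with $n=6$, $p=5$, $\lambda=(3,3,3,3,1)$ and $\mu=St_5$, the sweep costs $3+2+1+4=10$ and leaves $t=1$, so the total is $f(St_5)+5$, over budget. Your escape clause (``in extreme regimes travel almost directly'') names the problem but supplies no construction, and ``$\mu$ near $St_p$'' is not an extreme regime --- it is the generic hard case.

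What is missing is precisely the machinery the paper builds: a landmark weight $M(\mu)=(0,\dots,0,1,0,\dots,0,\mu_{\ell_{\mu}},p-1,\dots,p-1)$ on the standard path, so that the tail of $(p-1)$'s common to $\mu$ and to the partially cleared $\lambda$ is never rebuilt from scratch; a modular pre-correction in which one first adds $\lambda_0$ to the first entry, with $\lambda_0$ chosen so that the residue deposited at position $\ell_{\lambda}$ (or $\ell_{\mu}$) after clearing is exactly $p-1$ (resp.\ $\mu_{\ell_{\mu}}$), rather than your weaker ``make the first entry small''; and the case split $\ell_{\lambda}>\ell_{\mu}$ versus $\ell_{\lambda}\le\ell_{\mu}$ (with sub-cases for $\mu_{\ell_{\mu}}=0$ and $\ell_{\mu}=n-1$), under which the clearing cost, at most $(n-1)(p-1)$, is absorbed because the rebuild only covers positions up to $\ell_{\lambda}-1\le n-2$ or up to $\ell_{\mu}$. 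Until you specify such a target and verify the counts case by case, the bound $d_{\Gamma}(\lambda,\mu)\le\frac12(p-1)(n^2-n)$ has not been proved.
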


\begin{proof}
Let $\lm$ and $\mu$ be distinct $p$-restricted weights. We will show there is a path from $V(\lm)|_G$ to $V(\mu)|_G$ of length no more than $\frac{1}{2}(p-1)(n^2-n).$ In order to describe such a path we make the following definitions. Let $M$ be the set of vertices in the path detailed in the proof of Lemma \ref{short} from $V(\underline{0})|_G$ to $V(St_p)|_G.$ For any $p$-restricted weight $\mu$ we define the following integer \begin{align*}
    \emu: = \begin{cases} 0 \hspace{2cm} & \hspace{1cm} \mu = St \\ n \hspace{2cm} & \hspace{1cm} \mu = \underline{0} \\
     \max\{x \in \Z | 1 \leq x \leq n-1, \hspace{0.2cm} \mu_{x} <p-1\} & \hspace{1cm} \text{otherwise}.
    \end{cases}
\end{align*}
If $\mu \in M$ we say $\mu \in \widetilde{M}$ if all entries before position $\emu$ are $0.$ We define the integer \begin{align*}
    \sem: = \begin{cases} 0  \hspace{2cm} & \hspace{1cm} \mu \in \widetilde{M} \\ \max \{x \in \Z | 1 \leq x <\emu, \hspace{0.2cm} \mu_{x}>0 \} & \hspace{1cm} \text{otherwise}.
    \end{cases}
\end{align*}

To a $p$-restricted weight $\mu$ we associate an element of $M(\mu) \in M$ defined as follows. If $\mu \in M$ let $M(\mu)=\mu$, otherwise let $M(\mu):= (0,\dots,0,1,0,\dots,0,\mu_{\emu},p-1,\dots,p-1)$ where the $1$ is in position $\sem$ and the $\mu_{\emu}$ is in position $\emu.$ By Lemma \ref{paths} there exists a path from $M(\mu)$ to $\mu$ as follows. If $\mu \in M$ then the path is trivial so we may assume $1 \leq \sem <\emu$. We add $1$ to the first entry, move it along repeatedly until we add $1$ in position $\sem$ and repeat this $\mu_{\sem}-1$ times. This results in a weight in which the entry in position $\sem$ is $\mu_{\sem}.$ We then add $1$ to the first entry, move it along until it is in position $\sem-1$ and repeat $\mu_{\sem-1}$ times. This results in a weight in which the entry in position $\sem-1$ is $\mu_{\sem-1}$. Continue in this way until the resulting weight is $\mu.$ The total number of steps taken in this path is  $$(\mu_{\sem}-1)\sem+ \sum_{i=1}^{\sem-1} i \mu_i.$$ 

We will now describe a path from $\lm$ to $M(\mu)$ for cases (i) $\el > \emu$ and (ii) $\el \leq \emu.$ The path exists by Lemma \ref{paths} where we are using the terminology from Remark \ref{moving}.

Suppose $\lm \neq \underline{0}$ and $\lm$ and $\mu$ are as in (i). There is a path as follows. Starting at $\lm$
 add 1 to the first entry $\lm_0$ times where $\lm_0$ is the unique element of the set $\{0,\dots,p-2\}$ such that \begin{equation*}
    \lm_0 + \sum_{i=1}^{\el}\lm_i \equiv 0 \mod p-1.
\end{equation*} The resulting weight is one in which the entry in the first position is $0$ if $\lm_0=\lm_1=0$ and the unique integer $S_1 \in \{1,2,\dots,p-1\} $ such that $S_1 \equiv \lm_0+\lm_1 \mod p-1$ otherwise. Then clear $1$ from the first entry $S_1$ times resulting in a weight in which the first entry is $0.$ Continue in this way; for $1<j\leq \ell_{\lm}$ let $j-1$ be the position of the first nonzero entry, clear $1$ from position $j-1$ and add it to position $j$ repeatedly until the resulting weight has $0$ in position $j-1$. The entry in the position $j$  of this weight will be $0$ if $\sum_{i=0}^{j} \lm_i= 0$ and the unique integer $S_j \in \{1,2,\dots,p-1\}$ such that $S_j \equiv \sum_{i=0}^{j} \lm_i \mod p-1$ otherwise. Note that it follows from the definition of $\lm_0$ that if $\sum_{i=0}^{\el} \lm_i \neq 0$ then in the final step clearing $1$'s from position $\el-1$ until it is $0$ will result in a weight whose entry in position $\el$ is $p-1$. Therefore we have described a path from $\lm$ to a weight in which all entries before position $\el$ are $0$ and the entry in position $\el$ is $0$ if $\sum_{i=0}^{\el} \lm_i= 0$ and $p-1$ otherwise. All entries in position greater than $\el$ in the resulting weight will be $p-1.$ This path takes $\lm_0 + \sum_{i=1}^{\el-1}S_i$ steps. Then add 1 to the first entry and move it along to position $\el-1$ and repeat $p-1$ times. Continue to add $1$ and move it along to the furthest position whose entry is not $p-1$ until the entries in all positions after $\emu$ are $p-1.$ We then add $1$ and move it along to position $\emu$ and repeat $\mu_{\emu}$ times. Finally add $1$ and move it to position $\sem$. This path takes a further $\sum_{i=\emu+1}^{\el-1}i(p-1) + \emu \mu_{\emu} + s_{\mu}$ steps and the resulting weight is $M(\mu).$

Letting $X$ denote the number of edges in the path above from $\lm$ to $M(\mu)$ and then to $\mu$ we have
\begin{align*}
 X &=  \lm_0 + \sum_{i=1}^{\el-1}S_i + \sum_{i=\emu+1}^{\el-1}i(p-1) + \emu \mu_{\emu} + \sum_{i=1}^{\sem} i \mu_i \\
 &= \lm_0 + \sum_{i=1}^{\el-1}S_i + \sum_{i=\emu+1}^{\el-1}i(p-1) + \sum_{i=1}^{\emu} i \mu_i.
\end{align*} Note that $\lm_0$ and each of the $S_i$ are all no greater than $p-1$  and $\el<n$ which implies $\lm_0 + \sum_{i=1}^{\el-1}S_i \leq (n-1) (p-1).$ Furthermore since each $\mu_i \leq p-1$ we have $X \leq \sum_{i=1}^{n-1}(p-1)i = \frac{1}{2}(p-1)(n^2-n).$

Suppose now that $\lm=\underline{0}.$ The path from $\lm$ to $M(\mu)$ is the path described in Lemma \ref{short} since $M(\mu)\in M.$ If $\mu \in M$ then $d_{\Gamma}(\lm,\mu) \leq \frac{1}{2}(p-1)(n^2-n)$ by Lemma \ref{short}. Therefore we may assume $\mu \not\in M$ in which case, looking at the definition of $M(\mu),$ we see that the path from $\underline{0}$ to $M(\mu)$ described by Lemma \ref{short} is of length  \begin{align*}
    X' &= \sem + \emu \mu_{\emu} + \sum_{i=1}^{n-1-\emu} (n-i)(p-1).
\end{align*} Therefore if $X$ is the number of edges in the path from $\underline{0}$ to $\mu$ then \begin{align*}
    X &= X'+ (\mu_{\sem}-1)\sem+ \sum_{i=1}^{\sem-1} i \mu_i \\
    &\leq \sum_{i=1}^{n-1-\emu} (n-i)(p-1) + \emu \mu_{\emu} + \sum_{i=1}^{\sem} i \mu_i \\ &\leq \frac{1}{2}(p-1)(n^2-n).
\end{align*}

Suppose from now on that $\lm$ and $\mu$ are as in case (ii) so that $\el \leq \emu.$ Note that $\emu = 0$ if and only if $\el=0$ and therefore $\mu \neq St_p.$

If $\mu_{\emu} \neq 0$ then there is a path from $\lm$ to $\mu$ as follows. Starting with $\lm$ add $1$ to the first entry $\lm_0$ times where $\lm_0$ is the unique element of the set $\{0,\dots,p-2\}$ such that \begin{equation*}
    \lm_0 + \sum_{i=1}^{\emu}\lm_i \equiv \mu_{\emu} \mod p-1.
\end{equation*}
Then clear $1$'s repeatedly from the first nonzero entry until the entry in every position before $\emu$ in the resulting weight is $0$. By Lemma \ref{paths} this describes a path to a weight in which all entries before position $\emu$ are $0$ and the entry in position $\emu$ is $S_{\emu}=\mu_{\emu},$ where we adopt the notation $S_i$ from the proof of case (i). This process takes $\lm_0 + \sum_{i=1}^{\emu-1}S_i$ steps. Then add a $1$ and move it along to position $\sem.$ This describes a path from $\lm$ to the weight $M(\mu).$ The resulting path from $\lm$ to $M(\mu)$ and then to $\mu$ therefore takes $X$ steps where 
\begin{align*}
    X &=  \lm_0 + \sum_{i=1}^{\emu-1}S_i + \sum_{i=1}^{\emu-1} i \mu_i \\
    &\leq \frac{1}{2}(p-1)(n^2-n).
\end{align*} 

If $\mu_{\emu}=0$ and $\emu=n-1$ the our path from $\lm$ to $M(\mu)$ is as follows. Starting with $\lm$ add $1$ to the first entry $\lm_0$ times where $\lm_0$ is the unique element of the set $\{0,\dots,p-2\}$ such that \begin{equation*}
    \lm_0 + \sum_{i=1}^{\emu}\lm_i \equiv 1 \mod p-1.
\end{equation*}
Proceed as in the case above clearing $1$'s until all entries before position $\emu=n-1$ are $0$. Applying the arguments from the cases above the resulting weight will have a $1$ in position $n-1.$ We then clear $1$ from the entry in position $n-1$ resulting in the weight $\underline{0}$. Since $\lm_0 \leq p-2$ this path takes at most $(p-1)(n-1)$ steps. Finally we add $1$ and move it along into position $s_{\mu}$ resulting in the weight $M(\mu)$. Combining this with the path from $M(\mu)$ to $\mu$ described above we have a path from $\lm$ to $\mu$ that takes no more than  $(p-1)(n-1) +\sum_{i=1}^{n-2}(p-1)i = \frac{1}{2}(p-1)(n^2-n)$ steps.

Finally if $\mu_{\emu}=0$ and $\emu<n-1$ then our path from $\lm$ to $M(\mu)$ is as follows. Starting with $\lm$ add $1$ to the first entry $\lm_0$ times where $\lm_0$ is the unique element of the set $\{0,\dots,p-2\}$ such that \begin{equation*}
    \lm_0 + \sum_{i=1}^{\emu}\lm_i \equiv 0 \mod p-1.
\end{equation*}
Proceed as in the case above clearing $1$'s until all entries before position $\emu$ are $0$ resulting in a weight whose entry in position $\emu$ is $p-1$. Note that since $\el \leq \emu$ the entry in position $\emu+1$ of the resulting weight is $p-1.$ We then subtract $1$ from the entry in position $\emu$ and add it to the entry in position $\emu+1$ and repeat $p-1$ times. Finally add $1$ and move it into position $s_{\emu}$ resulting in the weight $M(\mu).$ This path required at most $(\emu+1)(p-1)$ steps. Since $\emu<n-1$, by combining this with the path from $M(\mu)$ to $\mu$ we have described a path from $\lm$ to $\mu$ that requires no more than $\frac{1}{2}(p-1)(n^2-n)$ steps. This completes the proof. 
\end{proof}

\bibliography{On_the_modular_McKay_graph}

\providecommand{\bysame}{\leavevmode\hbox to3em{\hrulefill}\thinspace}
\providecommand{\MR}{\relax\ifhmode\unskip\space\fi MR }
\providecommand{\MRhref}[2]{%
  \href{http://www.ams.org/mathscinet-getitem?mr=#1}{#2}
}
\providecommand{\href}[2]{#2}
\begin{thebibliography}{1}

\bibitem{benkart2018tensor}
G.~Benkart, P.~Diaconis, M.W. Liebeck, and P.H. Tiep, \emph{Tensor product
  {M}arkov chains}, J. Algebra (in press) (2019), DOI:
  10.1016/j.jalgebra.2019.10.038.

\bibitem{10.2307/2034344}
R.~Brauer, \emph{A note on theorems of {B}urnside and {B}lichfeldt}, Proc.
  Amer. Math. Soc. \textbf{15} (1964), 31--34.

\bibitem{brundan2000translation}
J.~Brundan and A.~Kleshchev, \emph{On translation functors for general linear
  and symmetric groups}, Proc. Lon. Math. Soc. \textbf{80} (2000), 75--106.

\bibitem{doty1985submodule}
S.R. Doty, \emph{The submodule structure of certain {W}eyl modules for groups
  of type ${A}_n$}, J. Algebra \textbf{95} (1985), 373--383.

\bibitem{fulman2008convergence}
J.~Fulman, \emph{Convergence rates of random walk on irreducible
  representations of finite groups}, J. Theor. Prob. \textbf{21} (2008),
  193--211.

\bibitem{fulton2013representation}
W.~Fulton and J.~Harris, \emph{Representation theory: a first course}, Graduate
  Texts in Mathematics, vol. 129, Springer-Verlag New York, 2013.

\bibitem{Liebeck}
M.W. Liebeck, A.~Shalev, and P.H. Tiep, \emph{On the diameters of {M}ckay
  graphs for finite simple groups}, Israel J. Math. (to appear),
  arXiv:1911.13284.

\bibitem{malle_testerman_2011}
G.~Malle and D.~Testerman, \emph{Linear algebraic groups and finite groups of
  lie type}, Cambridge University Press, 2011.

\end{thebibliography}

\bibliographystyle{amsplain}

\noindent M.G. Norris, Department of Mathematics, King's College, London WC2R 2LS, UK \\
\textit{Email address: miriam.norris@kcl.ac.uk}

\end{document}